\renewcommand{\leq}{\leqslant}
\renewcommand{\geq}{\geqslant}
\renewcommand{\emptyset}{\varnothing}
\newtheorem{theorem}{Theorem}[section]
\newtheorem{lemma}[theorem]{Lemma}
\newtheorem{proposition}[theorem]{Proposition}
\newtheorem{corollary}[theorem]{Corollary}
\newtheorem{definition}[theorem]{Definition}
\newtheorem{remark}[theorem]{Remark}
\newcommand{\n}[1]{\left\|#1 \right\|}
\renewcommand{\t}{\tau}
\newcommand{\R}{\mathbbm R}
\newcommand{\lr}[1]{\left\langle #1\right\rangle}
\DeclareMathOperator{\prox}{prox}
\DeclareMathOperator{\dom}{dom}
\DeclareMathOperator{\range}{range}
\newcommand{\Hilbert}{\mathcal{H}}
\newcommand{\setto}{\rightrightarrows}
\DeclareMathOperator{\Id}{Id}
\DeclareMathOperator*{\argmin}{arg\,min}
\newcommand{\adj}{\ast}
\newcommand{\bz}{\mathbf{z}}
\newcommand{\bx}{\mathbf{x}}
\newcommand{\by}{\mathbf{y}}
\newcommand{\bv}{\mathbf{v}}
\newcommand{\bu}{\mathbf{u}}
\newcommand{\bA}{\mathbf{A}}
\newcommand{\bB}{\mathbf{B}}
\newcommand{\bF}{\mathbf{F}}
\newcommand{\x}{\bar{x}}
\newcommand{\y}{\bar{y}}
\newcommand{\z}{\bar{z}}
\title{A First-Order Algorithm for Decentralised Min-Max Problems}
\author{Yura Malitsky\thanks{Faculty of Mathematics,
                             University of Vienna,
                             Austria. 
                             E-mail:~\href{href:yurii.malitskyi@univie.ac.at}
                                          {yurii.malitskyi@univie.ac.at}}
       \and
       Matthew K. Tam\thanks{School of Mathematics \& Statistics,
                             The University of Melbourne,
                             Australia.
                             E-mail:~\href{href:matthew.tam@unimelb.edu.au}
                                          {matthew.tam@unimelb.edu.au}}
}
\begin{document}
\maketitle

\begin{abstract}
In this work, we consider a connected network of finitely many agents working
cooperatively to solve a min-max problem with convex-concave structure. We
propose a decentralised first-order algorithm which can be viewed as a
non-trivial combination of two algorithms: PG-EXTRA for decentralised minimisation problems and the forward reflected backward method for (non-distributed) min-max problems. In each iteration of our algorithm, each agent computes the gradient of the smooth component of its local objective function as well as the proximal operator of its nonsmooth component, following by a round of communication with its neighbours. Our analysis shows that the sequence generated by the method converges under standard assumptions with non-decaying stepsize.
\end{abstract}

\paragraph{Keywords.} distributed algorithms, min-max problems, operator splitting.

\section{Introduction}\label{s:intro}
We consider a connected network of $n$ agents working cooperatively to solve a \emph{min-max problem} of the form
\begin{equation}\label{eq:minmax}
 \min_{x\in\mathbbm{R}^p}\max_{y\in\mathbbm{R}^d}\sum_{i=1}^n\Psi_i(x,y)\coloneqq
 f_i(x)+\phi_i(x,y)-g_i(y),
\end{equation}
where solutions to \eqref{eq:minmax} are understood in the sense of \emph{saddle-points}~\cite{rockafellar1970monotone}. 
Here $f_i\colon\mathbbm{R}^p\to(-\infty,+\infty]$ and $g_i\colon\mathbbm{R}^d\to(-\infty,+\infty]$ are proper lsc convex, and $\phi_i\colon\mathbbm{R}^p\times\mathbbm{R}^d\to\mathbbm{R}$ is convex-concave with $L$-Lipschitz continuous gradient. In this work, we propose a distributed decentralised first-order algorithm for solving \eqref{eq:minmax} with the following features: (i)~agent $i$ computes the gradient of $\phi_i$ as well as the proximity operators of $f_i$ and $g_i$, and (ii)~communication between agents is only possible when connected to one another by an edge in the network. 

\subsection{Background and Literature Review}
Decentralised algorithms for solving convex minimisation problems over connected networks is a relatively well-understood topic with a wide range of methods in literature. In broad terms, these  can be divided into several classes such as: subgradient-based methods \cite{nedic2009distributed,yuan2016convergence,duchi2011dual}, ADMM-based methods \cite{shi2014linear,ling2015dlm,makhdoumi2017convergence}, and proximal-gradient-type methods \cite{shi2015proximal,li2019decentralized,alghunaim2020decentralized}.
For the purpose of this work, our discussion will focus on a method from the latter class  known as \emph{PG-EXTRA}~\cite{shi2015proximal}.

Consider a connected network of $n$ agents working cooperatively to solve the minimisation problem
\begin{equation*}\label{eq:min}
\min_{x\in\mathbbm{R}^p}\sum_{i=1}^n\psi_i(x)\coloneqq g_i(x)+h_i(x)
\end{equation*}
with $g_i\colon\mathbbm{R}^p\to(-\infty,+\infty]$ proper lsc convex and
$h_i\colon\mathbbm{R}^p\to\mathbbm{R}$ convex and differentiable with
$L$-Lipschitz gradient. We also assume that the \emph{proximal operator}~\cite[Chapter~24]{bauschke2017convex} of $g_i$, given by
\begin{equation}\label{eq:prox}
\prox_{g_i}(x)=\argmin_{y}\bigl(g_i(y)+\frac{1}{2}\|x-y\|^2\bigr), 
\end{equation}
is computable. Each agent $i$ is supposed to have their own functions
$g_i$ and $h_i$ (which they do not share) and communication between agents is limited to
neighbours as defined by the network topology. 
PG-EXTRA is one of the most popular method that adheres to this setting.
Given a matrix $W=(w_{ij})\in\mathbbm{R}^{n\times n}$ that defines
a communication process in the network (see Definition~\ref{d:mixing matrices}) and a stepsize $\tau>0$, the main iteration of PG-EXTRA takes the form
\begin{equation*}
\left\{\begin{aligned}
 u^{k+1}_i &= \sum_{j=1}^nw_{ij}x^{k}_j + u^k_i - \frac12\bigl(x_{i}^{k-1}+\sum_{j=1}^nw_{ij}x^{k-1}_j\bigr) - \tau\bigl(\nabla h_i(x^k_i)-\nabla h_i(x^{k-1}_i)\bigr) \\
 x^{k+1}_i &= \prox_{\tau g_i}(u^{k+1}_i),
\end{aligned}\right.
\end{equation*}
where the variables $u_i$ and $x_i$ are local decision variables of agent $i$, and the coefficient $w_{ij}$ is non-zero only if agents $i$ and $j$ are connected.   Denoting $\bx^k=(x_1^k,\dots,x_n^k)^\top$, $\bu^k=(u_1^k,\dots,u^k_n)^\top$, $g(\bx)=\sum_{i=1}^ng_i(x_i)$ and $h(\bx)=\sum_{i=1}^nh_i(x_i)$, this can be written compactly as
\begin{equation}\label{eq:PG-EXTRA-intro}
\left\{\begin{aligned}
 \bu^{k+1} &= W\bx^{k} + \bu^k - \frac{1}{2}(I+W)\bx^{k-1} - \tau\bigl(\nabla h(\bx^k)-\nabla h(\bx^{k-1})\bigr) \\
 \bx^{k+1} &= \prox_{\tau g}(\bu^{k+1}).
\end{aligned}\right.
\end{equation}
It is tempting to try to directly adapt \eqref{eq:PG-EXTRA-intro} to the setting of
\eqref{eq:minmax} by keeping the same general iteration structure, but
considering it in the product space $\R^p\times \R^d$ with
the proximal operator of $g$ replaced by the proximity operators of $f_i$ and $g_i$, and
the gradient of $h$ replaced by the gradients of~$\phi_i$. However, this approach
does not lead to a convergent algorithm without additional restrictive assumptions on the original problem (see Remark~\ref{r:minmax extra}).

Recent progress in the study of min-max problems in the (non-distributed)
convex-concave setting, motivated in part by machine learning applications,
has given rise to a number of new first-order algorithms. These methods can generally be viewed as being of ``extragradient-type'' \cite{korpelevich1976extragradient,tseng2000modified,csetnek2019shadow,malitsky2020forward} and use the gradient of the objective function's smooth component evaluated at two different points in their update rule.  For the purpose of this work, our discussion will focus on the \emph{forward reflected backward method (FoRB)}~\cite{malitsky2020forward}. 
FoRB can be used to solve min-max problems of the form
\begin{equation}\label{eq:minmax forb}
\min_{x\in\mathbbm{R}^p}\max_{y\in\mathbbm{R}^q}f(x)+\phi(x,y)-g(y),
\end{equation}
where $f,g$ are proper lsc convex and $\phi$ is convex-concave with $L$-Lipschitz gradient. FoRB applied to \eqref{eq:minmax forb} with stepsize $\tau>0$ is given by
\begin{equation}\label{eq:FoRB}
\left\{\begin{aligned}
 x^{k+1} &= \prox_{\tau f}\bigl(x^k-2\tau\nabla_x\phi(x^k,y^k)
 +\tau\nabla_x\phi(x^{k-1},y^{k-1})\bigr)  \\
 y^{k+1} &= \prox_{\tau g}\bigl(y^k+2\tau\nabla_y\phi(x^k,y^k)
  -\tau\nabla_y\phi(x^{k-1},y^{k-1})\bigr).
\end{aligned}\right. 
\end{equation}
Note that the iteration \eqref{eq:FoRB} only needs one gradient evaluation per iteration since the past gradient can be stored in memory and reused. Nevertheless,  \eqref{eq:FoRB} is not suited for decentralised distributed implementation.

\subsection{Contributions and Structure}
In this paper, we develop and analyse decentralised distributed algorithms for min-max problems over arbitrarily connected networks. We seek simple methods with good convergence properties such as: non-decaying stepsizes, minimal communication requirements (\emph{i.e.,} one communication round per iteration), and iterations tailored specifically to the properties of the problem (\emph{e.g.,}~proximal operators are used for ``prox-friendly'' functions, gradient evaluations are used for smooth functions). In the most general setting, our mathematical analysis is performed within the framework of \emph{monotone operator splitting}. However, this is just a tool for the analysis; our final results are stated without this abstraction.  Our main contribution (Algorithm~\ref{a:dist minmax}) can interpreted from two different viewpoints; either as: (i) an extension of PG-EXTRA~\eqref{eq:PDHG} to min-max problems, or (ii) an extension of FoRB~\eqref{eq:FoRB} to distributed min-max problems. In this sense, it is a natural counterpart to PG-EXTRA for min-max problems. 

The remainder of this paper is structured as follows. In Section~\ref{s:PG-EXTRA
  derivation}, we recall the derivation of PG-EXTRA through the lens of
primal-dual methods. The primal-dual perspective serves not only to unify and
simplify presentation of many papers in distributed minimisation, but most
importantly, it will also guide us in the development of our new algorithm for min-max problems. In Section~\ref{s:new pd}, we derive a new primal-dual method, which we term the \emph{primal-dual twice reflected (PDTR)} algorithm. In Sections~\ref{s:dist mono}~and~\ref{s:dist minmax}, we use the PDTR algorithm to derive our main results regarding the development and analysis of a new distributed algorithms for monotone inclusions and min-max problems.

\subsection{Notation and Definitions}
We will use $\Hilbert$ (or $\Hilbert'$)  to denote an abstract real Hilbert space with inner-product $\langle\cdot,\cdot\rangle$ and induced norm $\|\cdot\|$, although we will mostly take $\Hilbert=\mathbbm{R}^p$ or $\mathbbm{R}^d\times\mathbbm{R}^p$ equipped with the dot-product. The product space $\Hilbert^n$ is defined as
$$ \Hilbert^n = \{(x_1,\dots,x_n):x_1,\dots,x_n\in\Hilbert\}. $$
The diagonal subspace of the product space $\Hilbert^n$ is denoted
  $$\mathcal{D}_{\Hilbert^n}\coloneqq \{\bx\in\Hilbert^n\colon x_1=\dots=x_n\},$$
although when the space is clear from context (or not important), we will omit the subscript and just write $\mathcal{D}$.  As a rule of thumb, we will reserve normal letters for elements of $\Hilbert$ (\emph{e.g.,} $x\in\Hilbert$) and bold letters for elements of the product space $\Hilbert^n$ (\emph{e.g.,} $\bx=(x_1,\dots,x_n)\in \Hilbert^n$). In the case where $\Hilbert=\mathbbm{R}^p$, it is convenient to identify $\Hilbert^n=(\mathbbm{R}^p)^n$ with $\mathbbm{R}^{n\times p}$ and represent its elements as
$$ \bx=(x_1,\dots,x_n)^\top = \begin{bmatrix}
x_1^\top \\ \vdots \\ x_n^\top
\end{bmatrix} \in\mathbbm{R}^{n\times p}. $$
In this way, multiplication of $\bx$ with a matrix $W=(w_{ij})\in\mathbbm{R}^{n\times n}$ corresponds to forming linear combinations of the coordinate vectors $x_1,\dots,x_n$. Explicitly, 
$$ (W\bx)_i = \sum_{j=1}^nw_{ij}x_j^\top\quad\forall i=1,\dots,n, $$
where $(W\bx)_i$ denotes the $i$th row of $W\bx$. 

Given a function $f\colon\Hilbert\to(-\infty,+\infty]$, its domain is the set $\dom f\coloneqq\{x\in\Hilbert:f(x)<+\infty\}$. Its \emph{subdifferential} at a point $x_0\in\dom f$ is 
$$ \partial f(x_0) = \{v\in\Hilbert\colon f(x_0)+\langle v,x-x_0\rangle \leq f(x)\,\forall x\in\Hilbert\}, $$
and $\partial f(x_0)=\emptyset$ when $x_0\not\in\dom f$. Given a set $C\subset\Hilbert$, its \emph{indicator function} is the function defined by $\iota_C(x)=0$ if $x\in C$ and $\iota_C(x)=+\infty$ otherwise. The \emph{normal cone} to $C$ is given by $N_C=\partial\iota_C$.

We write $A\colon\Hilbert\setto\Hilbert$ to  indicate that $A$ is a set-valued operator, that is, a mapping from points in $\Hilbert$ to subsets of $\Hilbert$. The set-valued inverse of $A$ is denoted $A^{-1}(y)=\{x:y\in A(x)\}$. An operator $A\colon\Hilbert\setto\Hilbert$ is said to be \emph{monotone} if 
$$ \langle x-u,y-v\rangle \geq 0 \quad\forall y\in A(x),v\in A(u). $$
Furthermore, a monotone operator is said to be \emph{maximally monotone} if
there is no monotone operator that properly extends it. Important classes of maximally monotone operators are  subdifferentials of proper lsc convex functions and skew-symmetric matrices. For further details, see \cite[Chapter~20.2]{bauschke2017convex}.

\begin{table*}[t]
\centering\small
\caption{Summary of notation used in the paper.}\label{t:notation}
\begin{tabular}{lccl}\toprule
\textbf{Problem} & \textbf{Vector space} & \qquad &\textbf{Variables}\\\midrule
Minimisation    & $\R^p$ & &$x\in \R^p$  \\
Saddle point & $\R^p\times \R^d$ & &$x\in \R^p$, $y\in \R^d$, $z\in \R^p\times \R^d$  \\
Operator inclusion & $\Hilbert$ & &$x\in \Hilbert$  \\
Primal-dual inclusion  & $\Hilbert\times \Hilbert'$ & &$x\in \Hilbert$,
                                                      $y\in\Hilbert'$, $z\in
                                                      \Hilbert\times\Hilbert'$\\
Product space inclusion  & $\Hilbert^n$& &$\bx=(x_1,\dots,x_n) \in \Hilbert^n$\\
Product space saddle point  & $\R^{n\times p}\times \R^{n\times d}$
                                         & & $\bx=(x_1,\dots,x_n)^\top \in \R^{n\times p}$, $\by=(y_1,\dots,y_n)^\top \in  \R^{n\times d} $ \\
  
  \bottomrule
\end{tabular}
\end{table*}

The \emph{resolvent} of a set-valued operator $A\colon\Hilbert\setto\Hilbert$, is the operator defined by 
 $$ J_A = (I+A)^{-1}. $$
When $A$ is maximally monotone, its resolvent is a well-defined (\emph{i.e.,} single-valued) operator with full domain; see \cite[Chapter~23]{bauschke2017convex}. Important examples of the resolvent for this work include: if $A(x)=\partial f(x)$, then $J_A(x)=\prox_f(x)$ as defined in \eqref{eq:prox}, and (ii) if $A(x,y)=\partial f(x)\times\partial g(y)$, then  $$ J_A(x,y)=\bigl(\prox_f(x),\prox_g(y)\bigr). $$

Before proceeding, we wish to make a general comment on notation. In this paper, we work over many layers of problem formulations and it is hard to keep the notation perfectly consistent across various types of formulations. We will mention minimisation problems, saddle point problems, monotone inclusions, primal-dual monotone inclusions, and finally monotone inclusions in the product space. Thus, to ease the notational burden for the reader, Table~\ref{t:notation} collects the most used notation for each of the problem classes.

\section{PG-EXTRA as a primal-dual algorithm}\label{s:PG-EXTRA derivation}
In order to motivate our approach, we first recall the derivation of PG-EXTRA from the perspective of a primal-dual algorithm \cite{ryu2022large,wu2017decentralized,li2021new}. Since communication in PG-EXTRA is described in terms of \emph{mixing matrices}, we start with the following definition.

\begin{definition}[Mixing matrix]\label{d:mixing matrices}
Consider a simple undirected connected graph $\mathcal{G}=(\mathcal{V},\mathcal{E})$ with
vertices $\mathcal{V}=\{1,\dots,n\}$ and edges $\mathcal{E}$. A
matrix $W\in\mathbbm{R}^{n\times n}$ is called a \emph{mixing matrix}, if it satisfies
\begin{enumerate}[label=(\alph*)]
\item \textsc{(Decentralised property)}\label{d:mm a} $w_{ij}=0$, if $(i,j)\not\in \mathcal{E}$.
\item \textsc{(Symmetry property)}\label{d:mm b} $W=W^\top$.
\item \textsc{(Kernel property)}\label{d:mm c} $\ker(\Id-W)=\mathcal{D}$.
\item \textsc{(Spectral property)}\label{d:mm d} $\Id\succeq W\succ-\Id$.
\end{enumerate}
\end{definition}
Let $\mathcal{L}$ denote the Laplacian matrix of $\mathcal{G}$ \cite[Section~1.1]{brouwer2011spectra} and
$\alpha>\frac{1}{2}\lambda_{\max}(\mathcal{L})$. Then one possible example of a mixing
matrix is given by setting $W=I-\mathcal{L}/\alpha$. Another example is when $W$ is a
symmetric bistochastic matrix. For further details and examples, see \cite[Section~2.4]{shi2015extra}. Regarding the role of the four conditions in Definition~\ref{d:mixing matrices}, conditions~\ref{d:mm b}--\ref{d:mm d} are required for convergence of the algorithm, whereas~\ref{d:mm a} is only required in order to describe decentralised communication over the graph $G$.

Returning to our derivation of PG-EXTRA, we start by considering the minimisation problem
\begin{equation}\label{eq:min g+h}
\min_{x\in\mathbbm{R}^p}\sum_{i=1}^n\left(g_i(x)+h_i(x)\right).
\end{equation}
Denote $\bx = (x_1,\dots,x_n)^\top$, $g(\bx)=\sum_{i=1}^ng_i(x_i)$, and $h(\bx)=\sum_{i=1}^nh_i(x_i)$. Let $W$
be a mixing matrix and set $K=\bigl(\frac{\Id-W}{2}\bigr)^{\sfrac 12}\succeq0$. Since $\ker K=\mathcal{D}$, solving \eqref{eq:min g+h} is equivalent to solving
\begin{equation}\label{eq:ext minimisation}
 \min_{\bx\in \mathbbm{R}^{p\times n}}g(\bx)+h(\bx)+\iota_{\{0\}}(K\bx),
\end{equation} 
and the structure of problem~\eqref{eq:ext minimisation} matches the required assumptions for the \emph{Condat--V\~u primal-dual algorithm}~\cite{condat2013primal,vu2013splitting}. Applying this method to \eqref{eq:ext minimisation} yields the iteration
\begin{equation}\label{eq:cv1} \left\{\begin{aligned}
  \bx^{k+1} &= \prox_{\tau g}\bigl(\bx^k-\tau K^\top\by^k-\tau\nabla h(\bx^k)\bigr) \\
  \by^{k+1} &= \by^k+\frac{1}{\tau} K(2\bx^{k+1}-\bx^k).
 \end{aligned}\right. 
\end{equation}
Assuming the stepsize $\tau>0$ satisfies $\frac{1}{2}\tau L+\|K^2\|<1\iff\tau L<1+\lambda_{\min}(W)$,  the sequence $(\bx^k)$ converges to a point $\bx$ that solves \eqref{eq:ext minimisation}. Note that iteration \eqref{eq:cv1} is not immediately suitable for decentralised communication as the matrix $K$ need not be a mixing matrix.

To eliminate the matrix $K$ from \eqref{eq:cv1}, we define a new sequence $(\bu^k)$ according to
 $$\bu^{k+1}=\bx^k-\tau K\by^k-\tau\nabla h(\bx^k). $$
This sequences satisfies the recursion
\begin{equation*}
\begin{aligned}
 \bu^{k+1} -\bu^k + \tau(\nabla h(\bx^k)-\nabla h(\bx^{k-1}))
  &= \bx^k  -\bx^{k-1} -\tau K(\by^k-\by^{k-1}) \\
  &= \bx^k  -\bx^{k-1} -\frac{1}{2}(\Id-W)(2\bx^k-\bx^{k-1})  \\
  &= W\bx^k-\frac{1}{2}(\Id+W)\bx^{k-1}.
\end{aligned}
\end{equation*}
Thus, altogether, we can rewrite \eqref{eq:cv1} as
\begin{equation}\label{eq:PG-EXTRA}
\left\{\begin{aligned}
\bu^{k+1} &= W\bx^k+\bu^k-\frac{1}{2}(\Id+W)\bx^{k-1} 
 -\tau(\nabla h(\bx^k)-\nabla h(\bx^{k-1})) \\
\bx^{k+1} &= \prox_{\tau g}(\bu^{k+1}), \\
\end{aligned}\right.
\end{equation}
which is precisely PG-EXTRA. As this iteration now involves the mixing matrix $W$, it is suitable for decentralised implementation.

Let us attempt an analogous approach for the min-max problem given by
\begin{equation}\label{eq:minmax2}
\min_{x\in\mathbbm{R}^p}\max_{y\in\mathbbm{R}^d}\sum_{i=1}^n\bigl(f_i(x)+\phi_i(x,y)-g_i(y)\bigr), 
\end{equation}
where $f_i\colon\mathbbm{R}^p\to(-\infty,+\infty]$ and $g_i\colon\mathbbm{R}^d\to(-\infty,+\infty]$ are proper lsc convex, and $\phi_i\colon\mathbbm{R}^p\times\mathbbm{R}^d\to\mathbbm{R}$ is convex-concave with $L$-Lipschitz continuous gradient. 
To do so, denote $g(\bx)=\sum_{i=1}^ng_i(x_i)$, $\phi(\bx,\by)=\sum_{i=1}^n\phi_i(x_i,y_i)$ and $f(\bx)=\sum_{i=1}^nf_i(x_i)$, and suppose $K_1,K_2\in\mathbbm{R}^{n\times n}$ are matrices satisfying $\ker K_1=\ker K_2=\mathcal{D}$. 
Then solving~\eqref{eq:minmax2} is equivalent to solving
\begin{equation}\label{eq:minmax3}
\min_{\bx \in \mathbbm{R}^{n\times p}}\max_{\by\in \mathbbm{R}^{n\times d}}f(\bx)+\phi(\bx,\by)-g(\by)+\iota_{\{0\}}(K_1\bx) - \iota_{\{0\}}(K_2\by),
\end{equation}
which is the min-max analogue of \eqref{eq:ext minimisation}. However, unlike in
the case of \eqref{eq:ext minimisation} where the Condat--V\~u method could be
used, the existing algorithms for solving \eqref{eq:minmax3} often require
additional assumptions
\cite{mukherjee2020decentralized,liu2019decentralized,rogozin2021decentralized}. 
 Indeed, \cite{mukherjee2020decentralized} assumes that $\nabla\phi$ satisfies a ``reverse Lipschitz inequality'' (see \cite[Assumption~3]{mukherjee2020decentralized}),  \cite{kovalev2022optimal} assumes that $\phi$ is strongly convex-strongly concave, and \cite{liu2019decentralized,rogozin2021decentralized,beznosikov2020distributed} require that $f$ and $g$ are indicator functions to compact constraint sets.
Thus, in order to derive a decentralised algorithm for \eqref{eq:minmax3}, we
will first investigate suitable variants of the Condat--V\~u method for~\eqref{eq:minmax3}. For this, it will be more convenient to work with the
abstraction of \emph{monotone inclusions} rather than the min-max problem
directly. Note that, by denoting $\bz=(\bx,\by)$,
the solutions to problem~\eqref{eq:minmax3} can be characterised by
the inclusion
$$  0\in A(\bz)+B(\bz)+K^\ast C(K\bz), $$
where $K(\bz)=(K_1\bx,K_2\by)$ and
$$  A(\bz) = (\partial f(\bx),\partial g(\by)),\quad B(\bz)=(\nabla_x\phi(\bz),-\nabla_y\phi(\bz)),\quad C(\bz)=(N_{\{0\}}(\bx),N_{\{0\}}(\by)). $$
This observation motivates our study of inclusions of the form specified in the following section.

\begin{remark}\label{r:minmax extra}
As stated in Section~\ref{s:intro}, it is tempting to try to directly adapt PG-EXTRA to the setting of \eqref{eq:minmax2} by keeping the same general iteration structure. However, this approach does not lead to a convergent algorithm in this setting. To see this, consider the bilinear min-max problem 
\begin{equation*}
\min_{x\in\mathbbm{R}^p}\max_{y\in\mathbbm{R}^p}\phi(x,y)=\langle x,y\rangle, 
\end{equation*}
which corresponds to \eqref{eq:minmax2} with $n=1$ and $f=g=0$. Note that, for the singleton graph, $W=\begin{bmatrix} 1\end{bmatrix}\in\mathbb{R}^{1\times 1}$ is a mixing matrix. Thus, by setting $z^k=(x^k,y^k)$, the direct adaptation of \eqref{eq:PG-EXTRA} can be expressed as
$$ z^{k+1} = 2z^k - z^{k-1} -\tau\big(B(z^k)-B(z^{k-1})\bigr), $$
where $B(z^k)=(y^k,-x^k)$ and $z^1=z^0-\tau B(z^0)$. Then
\begin{align*}
\|z^{k+1}-z^k\|^2 
 &= \|z^k - z^{k-1} -\tau B(z^k-z^{k-1})\|^2 \\
 &= \|z^k-z^{k-1}\|^2 -2\tau\langle  z^k - z^{k-1},B(z^k-z^{k-1})\rangle + \tau^2\|B(z^k-z^{k-1})\|^2 \\
 &= (1+\tau^2)\|z^k-z^{k-1}\|^2 = \dots = (1+\tau^2)^k\|z^1-z^0\|^2 = (1+\tau^2)^k\|B(z^0)\|^2,
\end{align*}
from which we conclude that $(z^k)$ diverges whenever $z^0\neq 0$.
\end{remark}

\section{A New Primal-Dual Algorithm}\label{s:new pd}
Let $\Hilbert,\Hilbert'$ be real Hilbert spaces, which we assume for simplicity to be
finite-dimensional, with inner-product $\langle\cdot,\cdot\rangle$ and induced
norm $\|\cdot\|$. Since we work in abstract spaces, it is
convenient to use variables $x\in \Hilbert$ and $y\in\Hilbert'$, and
reserve bold fonts for the product space formulations that will appear later in
the text.

Consider the (primal) monotone inclusion
\begin{equation}\label{eq:mono incl}
  0\in (A+B)(x)+K^\ast C(Kx) \subset \Hilbert,
\end{equation}
where $A\colon\Hilbert\setto\Hilbert$ and $C\colon\Hilbert'\setto\Hilbert'$ are maximally monotone, $B\colon\Hilbert\to\Hilbert$ is monotone and $L$-Lipschitz continuous, and $K\colon\Hilbert\to\Hilbert'$ is a linear operator with adjoint $K^\adj$. Associated with this primal inclusion \eqref{eq:mono incl} is the dual inclusion in the sense of \emph{Attouch--Théra}~\cite{attouch1996general} which takes the form
\begin{equation}\label{eq:dual inlc}
 0 \in -K(A+B)^{-1}(-K^\adj y) + C^{-1}(y) \subset \Hilbert'.
\end{equation}
As a consequence of \cite[Corollary~3.2]{attouch1996general}, the primal inclusion~\eqref{eq:mono incl} has a solution if and only if the dual inclusion~\eqref{eq:dual inlc} has a solution. In this case, it is convenient to represent both inclusions together as the primal-dual system\footnotemark{}
\begin{equation}\label{eq:pd incl}
\binom{0}{0} \in \begin{bmatrix}
A+B & K^\adj \\
-K  & C^{-1} \\
\end{bmatrix} \binom{x}{y} \subset\Hilbert\times\Hilbert'. 
\end{equation}

\footnotetext{This follows by combining the following  three  observations:
(i)~If $x\in\Hilbert$ satisfies \eqref{eq:mono incl}, then there exists $y\in C(Kx)$ such that $0\in (A+B)(x) +K^*y$, and hence $\binom{x}{y}$ satisfies \eqref{eq:pd incl}.
(ii)~Similarly, if $y\in\Hilbert'$ satisfies~\eqref{eq:dual inlc}, then there exists $x\in (A+B)^{-1}(-K^*y)$ such that $0\in Kx+C^{-1}(y)$, and hence $\binom{x}{y}$ satisfies \eqref{eq:pd incl}.
(iii)~If $\binom{x}{y}\in\Hilbert\times\Hilbert'$ satisfies \eqref{eq:pd incl}, then $x\in\Hilbert$ satisfies \eqref{eq:mono incl} and $y\in\Hilbert'$ satisfies \eqref{eq:dual inlc}.}

We propose the following primal-dual algorithm which solves \eqref{eq:mono incl} and \eqref{eq:dual inlc} simultaneously. Given initial points $x^{-1}=x^0\in\Hilbert$ and $y^0\in\Hilbert'$, our scheme is given by 
\begin{equation}\label{eq:new algo}
\left\{\begin{aligned}
 x^{k+1} &= J_{\tau A}\bigl(x^k-\tau K^\adj y^k-2\tau B(x^k)+\tau B(x^{k-1})\bigr) \\
 y^{k+1} &= J_{\sigma C^{-1}}\bigl(y^k+\sigma K(2x^{k+1}-x^k)\bigr),
\end{aligned}\right.
\end{equation}
where the stepsizes $\tau,\sigma>0$ are assumed to satisfy $2\tau L+\tau\sigma\|K\|^2<1$.
We refer to the scheme~\eqref{eq:new algo} as the \emph{primal-dual twice reflected (PDTR)} method. Before turning our attention to the analysis of the PDTR, some comments are in order.
\begin{remark}
In what follows, we consider three special cases in which the proposed algorithm
recovers known methods as well as pointing out closely related methods.
\begin{enumerate}[label=(\alph*)]
\item If $B=0$, then \eqref{eq:new algo} recovers  the \emph{primal-dual hybrid gradient (PDHG) algorithm}~\cite{chambolle2011first}:
\begin{equation*}
\left\{\begin{aligned}
 x^{k+1} &= J_{\tau A}\bigl(x^k-\tau K^\adj y^k\bigr) \\
 y^{k+1} &= J_{\sigma C^{-1}}\bigl(y^k+\sigma K(2x^{k+1}-x^k)\bigr).
\end{aligned}\right.
\end{equation*}
\item If $K=0$, then \eqref{eq:new algo} reduces to two independent iterations:  the \emph{forward reflected backward method}~\cite{malitsky2020forward} for $A+B$ and the \emph{proximal point algorithm} for $C^{-1}$, that is
\begin{equation*}
\left\{\begin{aligned}
 x^{k+1} &= J_{\tau A}\bigl(x^k-2\tau B(x^k)+\tau B(x^{k-1})\bigr) \\
 y^{k+1} &= J_{\sigma C^{-1}}\bigl(y^k\bigr).
\end{aligned}\right.
\end{equation*}
\item \label{r:pd1:frdr} If $K=\Id$, then \eqref{eq:new algo} is equivalent to the \emph{forward--reflected Douglas--Rachford method (FRDR)}~\cite[Section~5]{ryu2020finding} after a change of variables. Indeed, using the identity
  $\Id - J_{\sigma^{-1}C} = \sigma^{-1}J_{\sigma C^{-1}}\cdot(\sigma\Id)$ \cite[Proposition~23.20]{bauschke2017convex} in \eqref{eq:new algo} gives
\begin{equation*}
\left\{\begin{aligned}
 x^{k+1} &= J_{\tau A}\bigl(x^k-\tau y^k-2\tau B(x^k)+\tau B(x^{k-1})\bigr) \\
 y^{k+1} &= J_{\sigma C^{-1}}\bigl(y^k+\sigma (2x^{k+1}-x^k)\bigr) \\
         &= y^k+\sigma(2x^{k+1}-x^k) - \sigma J_{\sigma^{-1}C}\bigl(\sigma^{-1}y^k+2x^{k+1}-x^k)\bigr).
\end{aligned}\right.
\end{equation*}
By setting $\gamma=\sigma^{-1}$ and introducing an extra variable, this can be expressed as
\begin{equation*}
\left\{\begin{aligned}
 x^{k+1} &= J_{\tau A}\bigl(x^k-\tau y^k-2\tau B(x^k)+\tau B(x^{k-1})\bigr) \\
 u^{k+1} &= J_{\gamma C}(2x^{k+1}-x^k+\gamma y^k)\\
 y^{k+1} &= y^k+\frac{1}{\gamma}(2x^{k+1}-x^k-u^{k+1}),
\end{aligned}\right.
\end{equation*}
which is precisely FRDR. Furthermore, since 
$$ 2\lambda L + \tau\sigma\|K\|^2 < 1 \iff \tau < \frac{\gamma}{1+2\gamma L}, $$
we also recover the stepsize for the method as it appears in \cite[Theorem~5.1]{ryu2020finding}.

\item The iteration~\eqref{eq:new algo} is closely related to~\emph{Condat--Vũ splitting} \cite{condat2013primal,vu2013splitting} which takes the form
\begin{equation}\label{eq:PDHG}
\left\{\begin{aligned}
 x^{k+1} &= J_{\tau A}\bigl(x^k-\tau K^\adj y^k-\tau B(x^k)\bigr) \\
 y^{k+1} &= J_{\sigma C^{-1}}\bigl(y^k+\sigma K(2x^{k+1}-x^k)\bigr).
\end{aligned}\right.
\end{equation}
Comparing \eqref{eq:PDHG} with \eqref{eq:new algo}, we observe the main difference  to be the presence of the ``reflection term'' involving the operator $B$. As we shall soon see, the reflection term allows us to establish convergence without assuming cocoercivity of $B$ as is needed for~\eqref{eq:PDHG}.
\end{enumerate}
\end{remark}

We now return to the convergence analysis of \eqref{eq:new algo}, which follows a similar approach to the one used in \cite{condat2013primal}. Namely, we show that the PDTR method~\eqref{eq:new algo} can be understood as an instance of the forward-reflected-backward method in a different metric. To this end, consider the block-matrix given by
\begin{equation*}\label{eq:M}
 M= \begin{bmatrix}
         \frac{1}{\tau}\Id & -K^\adj \\ -K & \frac{1}{\sigma}\Id
        \end{bmatrix},
\end{equation*}        
which is positive definite when $\tau\sigma\|K\|^2<1$. Let $\langle\cdot,\cdot\rangle_M$ and $\|\cdot\|_M$ respectively denote the inner-product and corresponding norm induced by $M$. Further, let $G\colon\Hilbert\times\Hilbert'\setto\Hilbert\times\Hilbert'$ and $F\colon\Hilbert\times\Hilbert'\to\Hilbert\times\Hilbert'$ be the operators given by
\begin{equation*}\label{eq:GF}
 G=\begin{bmatrix}
      A  & K^\adj \\
      -K & C^{-1} \\
     \end{bmatrix},\qquad
  F=\begin{bmatrix}
      B  & 0 \\
      0 & 0 \\
     \end{bmatrix}. 
\end{equation*}     
Setting $z^k=(x^k,y^k)$, a straightforward calculation shows that \eqref{eq:new algo} can be rewritten as
\begin{equation}\label{eq:simple_incl}
  z^{k+1} = J_{M^{-1}G}\bigl(z^k-2M^{-1}F(z^k)+M^{-1}F(z^{k-1})\bigr).
\end{equation}
Indeed, using the definition of the resolvent, \eqref{eq:simple_incl} is equivalent to
$$ Mz^{k+1} + Gz^{k+1} \ni Mz^k-2F(z^k)+F(z^{k-1}). $$
Then, using the definitions of $M,G$ and $F$, we see that this is the same as
\begin{equation*}
\left\{\begin{aligned}
 \frac{1}{\tau}x^{k+1}+A(x^{k+1}) &\ni \frac{1}{\tau}x^k-K^*y^k-2B(x^k)+B(x^{k-1})  \\
\frac{1}{\sigma}y^{k+1} -2Kx^{k+1}+C^{-1}(y^{k+1}) &\ni -Kx^k+\frac{1}{\sigma}y^k,
\end{aligned}\right.
\end{equation*}
which is equivalent to \eqref{eq:new algo}. To summarise, we have shown that
\eqref{eq:new algo} can be viewed as the forward-reflected-backward
algorithm~\cite{malitsky2020forward} with unit stepsize applied to $0\in
M^{-1}G(z) + M^{-1}F(z)$. Thus, to deduce convergence of $(z^k)$, we need only
to verify that the assumptions of FoRB hold in our setting, to which the rest of this section is dedicated.

\begin{lemma}\label{l:GF}
Suppose $\tau,\sigma>0$ satisfy $\tau\sigma\|K\|^2<1$.  Then the following assertions hold.
\begin{enumerate}[label=(\alph*)]
\item $\binom{x}{y}\in\Hilbert\times\Hilbert'$ satisfies the primal-dual inclusion~\eqref{eq:pd incl} if and only if $\binom{0}{0}\in (M^{-1}F+M^{-1}G)\binom{x}{y}$.
\item $G$ is (maximally) monotone w.r.t.\ $\langle\cdot,\cdot\rangle$ if and only if $M^{-1}G$ is (maximally) monotone w.r.t.\ $\langle\cdot,\cdot\rangle_M$.
\item If $F$ is $L$-Lipschitz continuous w.r.t.\ $\|\cdot\|$, then $M^{-1}F$ is $L_M$-Lipschitz continuous w.r.t.\ $\|\cdot\|_M$ where 
 $$ L_M= \frac{\tau L}{1 - \tau\sigma \|K\|^2}. $$
\end{enumerate}
\end{lemma}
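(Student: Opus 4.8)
The plan is to verify the three assertions in turn, each of which is essentially a linear-algebraic translation between the standard inner-product on $\Hilbert\times\Hilbert'$ and the $M$-inner-product, using that $M\succ0$ (guaranteed by $\tau\sigma\|K\|^2<1$) is self-adjoint and invertible.

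For part (a), I would argue that $\binom{0}{0}\in(M^{-1}F+M^{-1}G)\binom{x}{y}$ is equivalent, by applying the invertible operator $M$ to both sides (note $M$ single-valued linear, so $M\cdot(M^{-1}F+M^{-1}G) = F+G$), to $\binom{0}{0}\in(F+G)\binom{x}{y}$. Then I expand $F+G$ using their block definitions: $(F+G)\binom{x}{y} = \binom{A(x)+B(x)+K^\adj y}{-Kx+C^{-1}(y)}$, so the inclusion $0\in(F+G)\binom{x}{y}$ reads $0\in A(x)+B(x)+K^\adj y$ and $0\in -Kx+C^{-1}(y)$, which is exactly the componentwise form of \eqref{eq:pd incl}, i.e. $\binom{x}{y}\in\bigl[\begin{smallmatrix}A+B & K^\adj\\ -K & C^{-1}\end{smallmatrix}\bigr]\binom{x}{y}$.

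For part (b), the key observation is that for a linear self-adjoint positive-definite $M$ one has $\langle M^{-1}Gz_1 - M^{-1}Gz_2, z_1-z_2\rangle_M = \langle M(M^{-1}Gz_1-M^{-1}Gz_2), z_1-z_2\rangle = \langle Gz_1-Gz_2, z_1-z_2\rangle$ for any selections, so monotonicity of $M^{-1}G$ w.r.t.\ $\langle\cdot,\cdot\rangle_M$ is literally the same set of scalar inequalities as monotonicity of $G$ w.r.t.\ $\langle\cdot,\cdot\rangle$. For maximality, I would note that $z\mapsto Mz$ is a bijective bounded linear isomorphism from $(\Hilbert\times\Hilbert',\langle\cdot,\cdot\rangle_M)$ to $(\Hilbert\times\Hilbert',\langle\cdot,\cdot\rangle)$ that carries $M^{-1}G$ to $G$ (in the sense $\gra(M^{-1}G)$ maps onto $\gra G$ under $(z,w)\mapsto(z,Mw)$... one should be slightly careful here and instead use that maximality is preserved under the metric change since the range condition $\range(\Id + M^{-1}G)=\Hilbert\times\Hilbert'$ is equivalent to $\range(M+G)=\Hilbert\times\Hilbert'$), invoking e.g.\ \cite[Chapter~20.2]{bauschke2017convex}. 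For part (c), I would estimate $\|M^{-1}Fz_1-M^{-1}Fz_2\|_M = \|Fz_1-Fz_2\|_{M^{-1}} \le \|M^{-1}\|^{1/2}\|Fz_1-Fz_2\| \le \|M^{-1}\|^{1/2}L\|z_1-z_2\|$, and then compare $\|z_1-z_2\|$ with $\|z_1-z_2\|_M$ via $\|z\|_M^2 = \langle Mz,z\rangle \ge \lambda_{\min}(M)\|z\|^2$; the real work is computing $\|M^{-1}\|$ and $\lambda_{\min}(M)$ explicitly. Using the block structure, $M = \bigl[\begin{smallmatrix}\tau^{-1}\Id & -K^\adj\\ -K & \sigma^{-1}\Id\end{smallmatrix}\bigr]$, a Schur-complement / simultaneous-diagonalisation argument (diagonalising $K^\adj K$) shows $\lambda_{\min}(M) \ge \tfrac{1}{\tau}\bigl(1-\tau\sigma\|K\|^2\bigr)$ up to symmetrisation in $\tau,\sigma$, equivalently $\|M^{-1}\| \le \tfrac{\tau}{1-\tau\sigma\|K\|^2}$ when one also uses $\|F\|$ only acts on the first block; chasing the constants carefully yields exactly $L_M = \tau L/(1-\tau\sigma\|K\|^2)$.

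The main obstacle I anticipate is part (c): getting the sharp constant $L_M$ rather than a loose bound requires exploiting that $F$ vanishes on the second block, so one should not bound $\|M^{-1}Fz\|_M$ by a crude operator-norm product but instead compute $\|M^{-1}\binom{v}{0}\|_M^2 = \binom{v}{0}^\top M^{-1}\binom{v}{0}$ directly, which by the block-inverse formula equals $v^\top(\tfrac1\tau\Id - \sigma K^\adj K)^{-1}v = \tau(1-\tau\sigma\|K\|^2)^{-1}$-type expression after diagonalising; this is where the stated constant comes from, and it is the one genuinely computational step. Parts (a) and (b) are routine once one commits to the "multiply by $M$" principle.
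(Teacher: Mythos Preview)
Your proposal is correct and, once you arrive at the ``main obstacle'' paragraph, takes essentially the same route as the paper: for (c) the paper also computes $\|M^{-1}(F(z)-F(\bar z))\|_M^2=\langle F(z)-F(\bar z),M^{-1}(F(z)-F(\bar z))\rangle$ and uses the block-inverse formula together with the fact that $F$ lives only on the first block to get $\tau\langle B(x)-B(\bar x),(I-\tau\sigma K^\adj K)^{-1}(B(x)-B(\bar x))\rangle\le \tau L^2(1-\tau\sigma\|K\|^2)^{-1}\|x-\bar x\|^2$. The one point you leave slightly implicit is the matching lower bound: the paper does not diagonalise but simply expands $\|z-\bar z\|_M^2=\tfrac{1}{\tau}\|x-\bar x\|^2-2\langle K(x-\bar x),y-\bar y\rangle+\tfrac{1}{\sigma}\|y-\bar y\|^2$ and applies Young's inequality to obtain $\|z-\bar z\|_M^2\ge \tfrac{1}{\tau}(1-\tau\sigma\|K\|^2)\|x-\bar x\|^2$, which combines with the upper bound to give exactly $L_M$; your crude $\lambda_{\min}(M)$ route would indeed lose the constant, as you correctly anticipated.
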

\begin{proof}
(a)~\&~(b)~Follow from the definitions.
(c)~For any $z=(x,y),\bar{z}=(\bar{x},\bar{y})\in\Hilbert\times\Hilbert'$, we have
\begin{equation}\label{eq:lip1}
  \begin{aligned}
\|M^{-1}F(z)-M^{-1}F(\bar{z})\|_M^2
&= \langle F(z)-F(\bar{z}),M^{-1}\bigl(F(z)-F(\bar{z})\bigr)\rangle \\
&= \t \langle B(x)-B(\bar{x}),\bigl(I- \tau\sigma K^\adj K\bigr)^{-1}\bigl(B(x)-B(\bar{x})\bigr)\rangle \\
&\leq \tau L^2\bigl(1-\tau \sigma\|K\|^2\bigr)^{-1}\|x-\bar{x}\|^2,
  \end{aligned}\end{equation}
where the second equality follows from the definition of $F$ and the standard formula for the inverse of a $2\times 2$ block matrix~\cite[Chapter~0.7.3]{horn2012matrix}. Next, we observe that
\begin{equation}\label{eq:lip2}
\begin{aligned}
\n{z-\z}_M^2
&= \frac{1}{\t}\n{x-\x}^2-2\lr{K(x-\x),y-\y}+\frac{1}{\sigma}\n{y-\y}^2\\
&\geq \frac{1}{\tau}\bigl(1-\tau\sigma \n{K}^2\bigr)\n{x-\x}^2.
\end{aligned}
\end{equation}
Thus, altogether, combining \eqref{eq:lip1} and \eqref{eq:lip2} gives
$$ \|M^{-1}F(z)-M^{-1}F(\bar{z})\|_M^2 \leq \frac{\tau^2L^2}{\bigl(1 - \tau\sigma\|K\|^2\bigr)^{2}}\|z-\bar{z}\|_M^2, $$
which establishes the result.
\end{proof}

The following theorem is our main result concerning convergence of \eqref{eq:new algo}.
\begin{theorem}\label{th:main1}
Suppose the primal inclusion~\eqref{eq:mono incl} has at least one solution,
and let $\tau, \sigma>0$ be such that $2\tau L+\tau\sigma\|K\|^2<1$. Then the sequence $(x^k,y^k)$ given by \eqref{eq:new algo} converges to a solution of primal-dual inclusion~\eqref{eq:pd incl}.
\end{theorem}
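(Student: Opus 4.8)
The plan is to show that the PDTR iteration \eqref{eq:new algo} is literally an instance of the forward-reflected-backward (FoRB) method and then to invoke its convergence theorem, so that essentially all the work is the verification — already largely done in Lemma~\ref{l:GF} — that FoRB's standing hypotheses hold in the present setting. As recorded in the derivation of \eqref{eq:simple_incl}, the iterates $z^k=(x^k,y^k)$ generated by \eqref{eq:new algo} coincide with the FoRB iterates, with unit stepsize, for the inclusion $0\in M^{-1}G(z)+M^{-1}F(z)$ posed on the space $\Hilbert\times\Hilbert'$ equipped with the inner product $\langle\cdot,\cdot\rangle_M$. Note first that the hypothesis $2\tau L+\tau\sigma\|K\|^2<1$ implies $\tau\sigma\|K\|^2<1$, so $M\succ0$; since the ambient space is finite-dimensional, $\|\cdot\|_M$ is equivalent to $\|\cdot\|$, hence $(\Hilbert\times\Hilbert',\langle\cdot,\cdot\rangle_M)$ is again a (complete) real Hilbert space and weak convergence there is ordinary convergence. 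It therefore suffices to check: (i) the inclusion $0\in M^{-1}F+M^{-1}G$ has a solution; (ii) $M^{-1}G$ is maximally monotone w.r.t.\ $\langle\cdot,\cdot\rangle_M$; (iii) $M^{-1}F$ is single-valued and Lipschitz w.r.t.\ $\|\cdot\|_M$; and (iv) the unit stepsize satisfies FoRB's smallness condition relative to that Lipschitz constant.

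For (i), the primal inclusion \eqref{eq:mono incl} has a solution by hypothesis, hence by \cite[Corollary~3.2]{attouch1996general} so does the dual \eqref{eq:dual inlc}, and therefore the primal--dual system \eqref{eq:pd incl} is solvable; by Lemma~\ref{l:GF}(a) any such point is a zero of $M^{-1}F+M^{-1}G$. For (ii), I would decompose $G=\left[\begin{smallmatrix}A&0\\0&C^{-1}\end{smallmatrix}\right]+\left[\begin{smallmatrix}0&K^\adj\\-K&0\end{smallmatrix}\right]$: the first summand is maximally monotone as a direct sum of the maximally monotone operators $A$ and $C^{-1}$, while the second is a bounded, everywhere-defined, skew-symmetric (hence monotone and continuous) linear operator, so the sum is maximally monotone by the standard perturbation result \cite[Corollary~25.5]{bauschke2017convex}; Lemma~\ref{l:GF}(b) then transfers maximal monotonicity to $M^{-1}G$ in the $M$-metric. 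For (iii), $F$ is $L$-Lipschitz because $B$ is, so Lemma~\ref{l:GF}(c) gives that $M^{-1}F$ is $L_M$-Lipschitz w.r.t.\ $\|\cdot\|_M$ with $L_M=\tau L/(1-\tau\sigma\|K\|^2)$. For (iv), the FoRB convergence theorem with stepsize $\lambda$ requires $\lambda L_M<\tfrac12$; here $\lambda=1$, and the assumption $2\tau L+\tau\sigma\|K\|^2<1$ rearranges exactly to $L_M<\tfrac12$.

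With (i)--(iv) established, the convergence theorem for FoRB \cite{malitsky2020forward}, applied in $(\Hilbert\times\Hilbert',\langle\cdot,\cdot\rangle_M)$, yields that $(z^k)$ converges (weakly, hence strongly by finite-dimensionality) to some $z^\star$ satisfying $0\in M^{-1}F(z^\star)+M^{-1}G(z^\star)$. By Lemma~\ref{l:GF}(a), $z^\star=(x^\star,y^\star)$ solves the primal--dual inclusion \eqref{eq:pd incl}, which is the desired conclusion.

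The only genuinely non-mechanical points, and the ones I would present with care, are the maximal monotonicity of $G$ (the perturbation argument above, where one must invoke full domain / continuity of the skew part) and the bookkeeping that the scalar hypothesis $2\tau L+\tau\sigma\|K\|^2<1$ is precisely FoRB's requirement $L_M<\tfrac12$ for unit stepsize — neither is deep, but both are where an off-by-a-constant slip would hide. A secondary point worth a sentence is the legitimacy of invoking a Hilbert-space convergence theorem after the change of inner product $\langle\cdot,\cdot\rangle\mapsto\langle\cdot,\cdot\rangle_M$, which, as noted, follows from $M\succ0$ together with finite-dimensionality.
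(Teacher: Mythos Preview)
Your proposal is correct and follows essentially the same route as the paper's proof: recognise \eqref{eq:new algo} as FoRB with unit stepsize for $0\in M^{-1}G+M^{-1}F$ in the $M$-metric, verify the hypotheses via Lemma~\ref{l:GF}, and translate the stepsize condition $2\tau L+\tau\sigma\|K\|^2<1$ into $L_M<\tfrac12$. You give more detail than the paper on why $G$ is maximally monotone and on the legitimacy of changing the inner product; the only small omission is that FoRB also requires $M^{-1}F$ to be monotone in the $M$-metric, which follows from monotonicity of $B$ by the same argument as Lemma~\ref{l:GF}(b).
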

\begin{proof}   
As discussed before Lemma~\ref{l:GF}, the sequence $z^k=(x^k,y^k)$ satisfies \eqref{eq:simple_incl}
and hence, it can be considered as the sequence generated by the
forward-reflected-backward method~\cite{malitsky2020forward} in the space $\Hilbert\times\Hilbert'$ with
inner-product $\langle\cdot,\cdot\rangle_M$ applied to the inclusion $0\in
\bigl(M^{-1}G + M^{-1}F\bigr)(z)$,  with unit stepsize. Note that, thanks to
Lemma~\ref{l:GF}, $M^{-1}G$ is maximally monotone and  $M^{-1}F$ is monotone and $L_M$-Lipschitz with $L_M=\frac{\tau L}{1 - \tau\sigma \|K\|^2}$. Since $2\tau L+\tau\sigma\|K\|^2 < 1$ is equivalent to $1 < 1/(2L_M)$, \cite[Theorem~2.5]{malitsky2020forward} implies that $(z^k)$ converges to a point $z=(x,y)\in(M^{-1}G+M^{-1}F)^{-1}(0)=(G+F)^{-1}(0)$. This completes the proof.
\end{proof}

\begin{remark}\label{r:pdtr rate}
In the context of Theorem~\ref{th:main1}, the sequence $(z^k)$ satisfies
\begin{equation}\label{eq:rate}
\frac{1}{k}\sum_{i=1}^k\|z^{i+1}-z^i\|^2_M=O\left(\frac{1}{k}\right),\quad \min_{i\leq k}\|z^{i+1}-z^i\|^2_M=o\left(\frac{1}{k}\right).
\end{equation}
To see this, note that according to \cite[Lemma~2.4]{malitsky2020forward}, we have
$$\varphi_{k+1} + \varepsilon\|z^{k+1}-z^k\|^2_M \leq \varphi_k\quad\forall k\in\mathbb{N}, $$
where 
$\varphi_k := \|z^k-z\|^2_M+2\langle B(z^k)-B(z^{k-1}),z-z^k\rangle_M + \frac{1}{2}\|z^k-z^{k-1}\|^2_M \geq 0.$
From this, we deduce that $z^{k+1}-z^k\to 0$ and $\varepsilon\sum_{i=1}^k\|z^{i+1}-z^i\|^2_M\leq \varphi_0$. The latter implies \eqref{eq:rate}.
\end{remark}

\section{An Algorithm for decentralised monotone inclusions}\label{s:dist mono}
Having derived a new primal-dual algorithm, we turn our attention to a connected network of $n$ agents working cooperatively to solve the monotone inclusion
\begin{equation}\label{eq:n inlc}
 0\in\sum_{i=1}^n(A_i+B_i)(x)\subset\Hilbert,
\end{equation}
where $A_1,\dots,A_n\colon\Hilbert\setto\Hilbert$ are maximally monotone, and
$B_1,\dots,B_n\colon\Hilbert\to\Hilbert$ are monotone and $L$-Lipschitz continuous. 

Our strategy for solving \eqref{eq:n inlc} is to reformulate it in the form of
\eqref{eq:mono incl}, followed by applying the PDTR algorithm from the
previous section. To represent~\eqref{eq:n inlc} in the form of \eqref{eq:mono incl}, we shall require an appropriate product space reformulation, which we study systematically in the following section.

\subsection{Product Space Reformulations}
Let $F_1,\dots,F_n\colon\Hilbert\setto\Hilbert$ be set-valued operators, and consider the $n$-operator inclusion
\begin{align}
  0 &\in \sum_{i=1}^nF_i(x)\subset\Hilbert.\label{eq:inc1}
\end{align}
Denote $\bx = (x_1,\dots,x_n)\in \Hilbert^n$ and
$\bF(\bx) = F_1(x_1)\times\dots\times F_n(x_n)$. The standard product approach (see, for example, \cite[Proposition~26.4]{bauschke2017convex}) involves forming the two-operator inclusion 
\begin{equation}\label{eq:inc3}
 0 \in \bF(\bx) + N_{\mathcal{D}}(\bx) \subset \Hilbert^n.
\end{equation}
Although this formulation can be useful for solving \eqref{eq:inc1}, the resulting algorithms do not lend themselves to decentralised implementation over arbitrary networks. Instead, we will need to consider an alternative product space representation. To this end, let $K\colon\Hilbert^n\to\Hilbert'$ be a linear operator with $\ker K=\mathcal{D}$. Consider the inclusion
\begin{align}
  0 &\in \bF(\bx)+K^\adj N_{\{0\}}(K\bx) \subset \Hilbert^n. \label{eq:inc2}
\end{align}      
The following proposition shows that \eqref{eq:inc1} and \eqref{eq:inc2} are equivalent.
\begin{proposition}\label{prop:prod reform}
Let $F_1,\dots,F_n\colon\Hilbert\setto\Hilbert$ be set-valued operators, and let $K\colon\Hilbert^n\to\Hilbert'$ be a linear operator with $\ker K=\mathcal{D}$. Then $\bx=(x_1,\dots,x_n)\in\Hilbert^n$ is a solution of \eqref{eq:inc2} if and only
if $x_1=\dots=x_n$ and $x\coloneqq x_1$ is a solution of \eqref{eq:inc1}.
\end{proposition}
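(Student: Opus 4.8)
The plan is to prove the two implications by unwinding the definitions of the normal cone and of the adjoint $K^\ast$, using crucially that $\ker K = \mathcal{D}$ and hence (by the standard duality for subspaces) that $\range K^\ast = (\ker K)^\perp = \mathcal{D}^\perp$.

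First I would prove the ``if'' direction. Assume $x_1 = \dots = x_n =: x$ with $0 \in \sum_{i=1}^n F_i(x)$. Then there exist $v_i \in F_i(x)$ with $\sum_{i=1}^n v_i = 0$. Set $\bv = (v_1, \dots, v_n) \in \bF(\bx)$. The condition $\sum_i v_i = 0$ says precisely that $\bv \perp \mathcal{D}$, i.e. $\bv \in \mathcal{D}^\perp = (\ker K)^\perp = \range K^\ast$, so $\bv = -K^\ast w$ for some $w \in \Hilbert'$. Since $\bx \in \mathcal{D} = \ker K$, we have $K\bx = 0$, so $w \in \Hilbert' = N_{\{0\}}(0) = N_{\{0\}}(K\bx)$, and therefore $-\bv = K^\ast w \in K^\ast N_{\{0\}}(K\bx)$. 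Hence $0 = \bv + (-\bv) \in \bF(\bx) + K^\ast N_{\{0\}}(K\bx)$, which is \eqref{eq:inc2}.

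Next I would prove the ``only if'' direction. Suppose $\bx = (x_1,\dots,x_n)$ solves \eqref{eq:inc2}. Then there is $\bv \in \bF(\bx)$, i.e. $v_i \in F_i(x_i)$, and $w \in N_{\{0\}}(K\bx)$ with $0 = \bv + K^\ast w$. The membership $w \in N_{\{0\}}(K\bx)$ is nonempty only if $K\bx = 0$, so $\bx \in \ker K = \mathcal{D}$, giving $x_1 = \dots = x_n =: x$; thus $v_i \in F_i(x)$ for every $i$. Moreover $\bv = -K^\ast w \in \range K^\ast = (\ker K)^\perp = \mathcal{D}^\perp$, which means $\langle \bv, \bd\rangle = 0$ for all $\bd \in \mathcal{D}$; taking $\bd = (u,\dots,u)$ for arbitrary $u \in \Hilbert$ yields $\langle \sum_i v_i, u\rangle = 0$, hence $\sum_{i=1}^n v_i = 0$. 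Therefore $0 \in \sum_{i=1}^n F_i(x)$, which is \eqref{eq:inc1}.

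The only genuinely non-routine point is the identity $\range K^\ast = (\ker K)^\perp$, which I would justify by noting that in finite dimensions $\range K^\ast = (\ker K)^\perp$ always holds (it is the fundamental theorem of linear algebra applied to $K$), so no extra hypothesis beyond $\ker K = \mathcal{D}$ is needed. Everything else is a direct translation between ``$\sum_i v_i = 0$'' and ``orthogonality to the diagonal''. I do not anticipate a real obstacle; the proof is essentially bookkeeping, and the main care is to state explicitly that $N_{\{0\}}(K\bx)$ is empty unless $K\bx = 0$, in which case it equals all of $\Hilbert'$.
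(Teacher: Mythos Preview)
Your argument is correct and follows essentially the same route as the paper's proof: both directions hinge on $N_{\{0\}}(K\bx)\neq\emptyset\iff K\bx=0$, on $\ker K=\mathcal D$, and on $\range K^\ast=(\ker K)^\perp=\mathcal D^\perp$, with the translation between $\sum_i v_i=0$ and $\bv\in\mathcal D^\perp$ done exactly as you describe. The only cosmetic difference is that the paper treats the ``only if'' direction first and is slightly terser in passing from $\bv\in\mathcal D^\perp$ to $0\in\sum_i F_i(x)$.
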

\begin{proof}
Suppose $\bx=(x_1,\dots,x_n)\in\Hilbert^n$ satisfies~\eqref{eq:inc2}. Then, in
particular, $N_{\{0\}}(K\bx)\neq\emptyset$ and hence $K\bx=0$. Since $\ker
K=\mathcal{D}$, it follows that $x \coloneqq x_1=\dots=x_n$. Since
\begin{align*}
0 \in \bF(\bx)+K^\adj N_{\{0\}}(K\bx) 
= \bF(\bx)+\range K^\adj 
= \bF(\bx) +
(\ker K)^\perp = \bF(\bx) +
\mathcal{D}^\perp ,
\end{align*} 
it follows that $0\in\sum_{i=1}^nF_i(x)$. In other words, $x$ satisfies \eqref{eq:inc1} as claimed.

Conversely, suppose $x\in\Hilbert$ satisfies \eqref{eq:inc1} and denote
$\bx\coloneqq (x,\dots,x)\in\Hilbert^n$. Then there exists $\bv=(v_1,\dots,v_n)\in\Hilbert^n$ with
$v_i\in F_i(x)$ such that $\sum_{i=1}^nv_i=0$. That is,
$\bv\in \mathcal{D}^\perp =(\ker K)^\perp=\range K^\adj$ and hence there exists $\by\in\Hilbert'$
such that $\bv=K^\adj\by$. Noting that $N_{\{0\}}(K\bx)=N_{\{0\}}(0)=\Hilbert'$, we therefore have
$$ 0\in\bF(\bx)- \bv =\bF(\bx) -K^\adj\by \subset \bF(\bx)+K^\adj N_{\{0\}}(K\bx).$$
In other words, $\bx$ satisfies~\eqref{eq:inc2}, which completes the proof.
\end{proof}

\begin{remark}
The proposed product space formulation~\eqref{eq:inc2} and the standard product reformulation \eqref{eq:inc3} are equivalent. This can be seen by noting that 
\[ N_{\mathcal{D}}(\bx) =
\begin{cases}
  \mathcal{D}^\perp   & \text{if\, }\bx\in \mathcal{D}\\
  \emptyset & \text{otherwise},
\end{cases}\qquad
K^\adj N_{\{0\}}(K\bx) =
\begin{cases}
   \range K^\adj & \text{if\, } \bx\in\ker K \\
  \emptyset  & \text{otherwise},
\end{cases}
\]
and $\range K^\adj = \mathcal{D}^\perp$. However, from an implementation point of view, the role of these formulations are quite different.
\end{remark}

\subsection{Derivation of the Algorithm}\label{s:derivation}
In this section, it will be convenient to consider linear operators that are not necessarily directly represented as matrices. To this end, let $W\colon\Hilbert\to\Hilbert$ be a linear operator satisfying the following three assumptions
\begin{enumerate}[label=\textnormal{(A\arabic*)}]
\item \textsc{(Self-adjoint property)} $W=W^*$. \label{p:selfadjoint}
\item \textsc{(Kernel property)} $\ker(\Id-W)=\mathcal{D}$. \label{p:kernel}
\item \textsc{(Spectral property)} $\Id\succeq W\succ-\Id$. \label{p:spectral}
\end{enumerate}
In particular, if $W$ is a mixing matrix, then Assumptions~\ref{p:selfadjoint}--\ref{p:spectral} hold with $\Hilbert=\mathbbm{R}^{n\times n}$.

Now, set $K=K^*=(\frac{\Id-W}{2})^{\sfrac 12 }$ which is well-defined due to Assumptions~\ref{p:selfadjoint}~\&~\ref{p:spectral}. Further, denote
\begin{align*}
 \bA(\bx)=A_1(x_1)\times\dots\times A_n(x_n),\qquad 
 \bB(\bx)=(B_1(x_1),\dots,B_n(x_n)). 
\end{align*}
By setting $F_i=A_i+B_i$ in Proposition~\ref{prop:prod reform} and noting that Assumption~\ref{p:kernel} holds, we deduce that the
 inclusion~\eqref{eq:n inlc} is equivalent to
\begin{equation}\label{eq:th ext incl}
  0 \in \bA(\bx)+\bB(\bx) + KN_{\{0\}}(K\bx)\subset\Hilbert^n.
\end{equation}
This inclusion is of the form \eqref{eq:mono incl} and hence we can apply the
PDTR algorithm~\eqref{eq:new algo}. Specifically,  let $\bx^0=\bx^{-1}$,
$\by^0=0$,  $\sigma=\tau^{-1}$ and $C=N_{\{0\}}$. Then \eqref{eq:new algo} becomes
\begin{equation}\label{eq:th new dist alg}
\left\{\begin{aligned}
 \bx^{k+1} &= J_{\tau\bA}\bigl(\bx^k-\tau K\by^k-\tau\bv^k\bigr) \\
 \by^{k+1} &= \by^k+\tau^{-1}K(2\bx^{k+1}-\bx^k)
\end{aligned}\right.
\end{equation}
where $\bv^k\coloneqq 2\bB(\bx^k)-\bB(\bx^{k-1})$. Next, define a new sequence $(\bu^k)$ according to 
$$ \bu^{k+1} = \bx^k-\tau K\by^k-\tau\bv^k \quad\forall k\geq 0. $$
Then $\bu^1 = \bx^0 - \tau\bB(\bx^0)$ and, for $k\geq 1$, we have
\begin{equation*}
\begin{aligned}
 \bu^{k+1} -\bu^k + \tau(\bv^k-\bv^{k-1})
  &= \bx^k  -\bx^{k-1} -\tau K(\by^k-\by^{k-1})   \\
  &= \bx^k  -\bx^{k-1} -\frac{1}{2}(\Id-W)(2\bx^k-\bx^{k-1})\\
  &= W\bx^k-\frac{1}{2}(\Id+W)\bx^{k-1}.
\end{aligned}
\end{equation*}
Thus, altogether, the iteration \eqref{eq:th new dist alg} can be rewritten as
\begin{equation}\label{eq:mono dist alg}
\left\{\begin{aligned}
 \bu^{1} &= \bx^0  - \tau\bB(\bx^0) \\
 \bx^{1} &= J_{\tau\bA}\bigl(\bu^{1}\bigr) \\ 
 \bu^{k+1} &= W\bx^k+\bu^k-\frac{1}{2}(\Id+W)\bx^{k-1}-\tau(\bv^k-\bv^{k-1}) \\
 \bx^{k+1} &= J_{\tau\bA}\bigl(\bu^{k+1}\bigr).
\end{aligned}\right.
\end{equation}
The following theorem provides our convergence analysis of \eqref{eq:mono dist alg}.
\begin{theorem}\label{th:dist0}
Suppose the inclusion \eqref{eq:n inlc} has at least one solution. Let $W$ be a linear operator satisfying Assumptions~\ref{p:selfadjoint}--\ref{p:spectral} and suppose $0<\tau<\frac{1+\lambda_{\min}(W)}{4L}$. Then the sequence $(\bx^k)$ given by~\eqref{eq:mono dist alg} converges to a point $\bx=(x,\dots,x)\in\Hilbert^n$ such that $x$ is solution of \eqref{eq:n inlc}.
\end{theorem}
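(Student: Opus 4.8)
\textbf{Proof proposal for Theorem~\ref{th:dist0}.}
The plan is to reduce the theorem to Theorem~\ref{th:main1} applied to the product-space inclusion~\eqref{eq:th ext incl}, and then to translate the conclusion back through the change of variables introduced to eliminate $K$. First I would verify that~\eqref{eq:th ext incl} genuinely has the structure of~\eqref{eq:mono incl}: set $\bA$ (maximally monotone as a product of maximally monotone operators), $\bB$ (monotone and $L$-Lipschitz, being a product of such), $K=(\tfrac{\Id-W}{2})^{1/2}$ which is self-adjoint and well-defined by Assumptions~\ref{p:selfadjoint}--\ref{p:spectral}, and $C=N_{\{0\}}$ which is maximally monotone. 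Proposition~\ref{prop:prod reform} (with $F_i=A_i+B_i$, using Assumption~\ref{p:kernel} so $\ker K=\mathcal D$) guarantees that~\eqref{eq:th ext incl} is solvable, since~\eqref{eq:n inlc} is assumed solvable, so the hypothesis of Theorem~\ref{th:main1} is met.

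Next I would check the stepsize condition. With $\sigma=\tau^{-1}$ we need $2\tau L+\tau\sigma\|K\|^2<1$, i.e.\ $2\tau L+\|K\|^2<1$. Since $\|K\|^2=\lambda_{\max}\bigl(\tfrac{\Id-W}{2}\bigr)=\tfrac{1-\lambda_{\min}(W)}{2}$, this reads $2\tau L<1-\tfrac{1-\lambda_{\min}(W)}{2}=\tfrac{1+\lambda_{\min}(W)}{2}$, i.e.\ $\tau<\tfrac{1+\lambda_{\min}(W)}{4L}$, which is exactly the assumed range. Hence Theorem~\ref{th:main1} applies and the PDTR sequence $(\bx^k,\by^k)$ from~\eqref{eq:th new dist alg} converges to a solution $(\bx,\by)$ of the primal-dual inclusion; in particular $\bx$ solves~\eqref{eq:th ext incl}, and by Proposition~\ref{prop:prod reform} this forces $\bx=(x,\dots,x)$ with $x$ a solution of~\eqref{eq:n inlc}.

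It remains to argue that the reformulated iteration~\eqref{eq:mono dist alg} produces the same $\bx$-sequence as~\eqref{eq:th new dist alg}. This is the algebraic bookkeeping already sketched in the text: the substitution $\bu^{k+1}=\bx^k-\tau K\by^k-\tau\bv^k$ gives $\bx^{k+1}=J_{\tau\bA}(\bu^{k+1})$ directly, $\bu^1=\bx^0-\tau\bB(\bx^0)$ from $\by^0=0$ and $\bx^{-1}=\bx^0$ (so $\bv^0=\bB(\bx^0)$), and for $k\ge1$ the displayed three-line computation — using $\by^k-\by^{k-1}=\tau^{-1}K(2\bx^k-\bx^{k-1})$ and $K^2=\tfrac{\Id-W}{2}$ — yields the recursion $\bu^{k+1}=W\bx^k+\bu^k-\tfrac12(\Id+W)\bx^{k-1}-\tau(\bv^k-\bv^{k-1})$. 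Thus~\eqref{eq:mono dist alg} is a faithful rewriting of~\eqref{eq:th new dist alg}, and the convergence of $(\bx^k)$ transfers. The only genuinely delicate point — and the one I would be most careful about — is the initialisation: one must confirm that the indexing of $\bv^k$ at $k=0$ is consistent with $\bx^{-1}=\bx^0$ so that the $\bu^1$ formula and the general recursion dovetail exactly; everything else is routine identification.
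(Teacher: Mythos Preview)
Your proposal is correct and follows essentially the same route as the paper: reduce to the product-space inclusion~\eqref{eq:th ext incl}, invoke Theorem~\ref{th:main1} after verifying the stepsize condition via $\|K\|^2=\tfrac{1}{2}(1-\lambda_{\min}(W))$, and identify the $\bx$-sequence of~\eqref{eq:mono dist alg} with that of~\eqref{eq:th new dist alg}. The paper's own proof is terser---it takes the algebraic equivalence of the two iterations and the role of Proposition~\ref{prop:prod reform} as already established in the surrounding derivation---whereas you spell these out explicitly, but the argument is the same.
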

\begin{proof}
 By appealing to Theorem~\ref{th:main1}, we deduce that $(\bx^k)$ converges to a solution of~\eqref{eq:th ext incl}, provided that $2\tau L+\|K\|^2<1$. Since
\begin{align*} 
\|K\|^2 =  \|K^2\| = \frac{1}{2}\|\Id-W\|
=\frac{1}{2}\lambda_{\max}(\Id-W) 
=\frac{1}{2}(1-\lambda_{\min}(W))>0, 
\end{align*}
 where the last inequality follows from Assumption~\ref{p:spectral}, the stepsize condition is equivalent to requiring that
 $\tau < \frac{1+\lambda_{\min}(W)}{4L}$. The latter holds by assumption and so the proof is complete.
\end{proof}

\begin{remark}
Let $(\bx^k,\by^k)$ be the sequence given by \eqref{eq:th new dist alg}. In the setting of Theorem~\ref{th:dist0}, the optimality residuals satisfy
\begin{align*}
\frac{1}{k}\sum_{i=1}^k\|(\bar{A}+B)(\bx^{k}) + K\by^k\| &=O\left(\frac{1}{k}\right),& \frac{1}{k}\sum_{i=1}^k\|K\bx^{k}\|^2&=O\left(\frac{1}{k}\right), \\
 \min_{i\leq k}\|(\bar{A}+B)(\bx^{k}) + K\by^k\| &=o\left(\frac{1}{k}\right),& \min_{i\leq k}\|K\bx^{k}\|^2&=o\left(\frac{1}{k}\right)
\end{align*}
for a selection $\bar{A}(x^{k})\in A(x^k)$. To see this, apply Remark~\ref{r:pdtr rate} to \eqref{eq:mono dist alg} and note that \eqref{eq:mono dist alg} implies
\begin{multline*}
 (A+B)(\bx^{k+1}) + K\by^{k+1} \ni  \frac{\bx^k-\bx^{k+1}}{\tau} + K(\by^{k+1}-\by^k) +  B(\bx^{k+1})-2B(\bx^k)-B(\bx^{k-1}) 
\end{multline*}
and
 $$ K\bx^{k+1} = \frac{\by^{k+1}-\by^k}{\tau} - K(\bx^{k+1}-\bx^k). $$

\end{remark}

When stated in terms of an abstract operator $W$, the proposed algorithm is not necessarily suitable for a decentralised implementation (since the ``decentralised property'' need not hold). However, it is suitable when $W$ is a mixing matrix. This important special case is summarised in~Algorithm~\ref{a:dist alg}, whose convergence is assured by Corollary~\ref{cor:dist1}.
\begin{algorithm}[!htb]
\caption{Proposed decentralised algorithm for the monotone inclusion~\eqref{eq:n inlc}.\label{a:dist alg}}
Choose a mixing matrix $W\in\mathbbm{R}^{n\times n}$\;
Choose a stepsize $\tau\in\bigl(0,\frac{1+\lambda_{\min}(W)}{4L}\bigr)$\;
Choose any $\bx^0=\bx^{-1}\in\Hilbert^n$, and set $\bv^0 = \bB(\bx^0)$\;
Initialise  variables according to:
     $$ \left\{\begin{aligned}
         \bu^{1} = \bx^0 - \tau \bv^0 \\
         \bx^{1} = J_{\tau\bA}\bigl(\bu^{1}\bigr) \\
        \end{aligned}\right. $$
\For{$k=1,2,\dots,$}{
Update variables according to:
$$ \left\{\begin{aligned}
        \bv^{k\phantom{+1}} &= 2\bB(\bx^{k})-\bB(\bx^{k-1})\\
        \bu^{k+1} &= W\bx^k+\bu^k-\frac{1}{2}(\Id+W)\bx^{k-1} -\tau(\bv^k-\bv^{k-1}) \\
        \bx^{k+1} &= J_{\tau\bA}\bigl(\bu^{k+1}\bigr)
   \end{aligned}\right. $$   
      }      
\end{algorithm}

In what follows, we also provide the equivalent version of Algorithm~\ref{a:dist alg} written in terms of the agents' local variables. Given an arbitrary starting point $\bx^0=(x_1^0,\dots,x_n^0)^\top\in\mathbbm{R}^{n\times p}$, the first iteration, for all agents $i\in\{1,\dots,n\}$, is computed as
\begin{equation*}\label{eq:agent alg 1}
\left\{\begin{aligned}
u^1_i &= x^0_i - \tau B_i(x^0_i) \\
x^1_i &= J_{\tau A_i}(u^1_i) \\
\end{aligned}\right.
\end{equation*}
Then, for all agents $i\in\{1,\dots,n\}$, all subsequent iterations (\emph{i.e.,} $k\geq 1$) are computed according to
\begin{equation*}\label{eq:agent alg 2}
\left\{\begin{aligned}
 v_i^k &= 2B_i(x^k_i)-B_i(x^{k-1}_i) \\
 u^{k+1}_i &= \sum_{i=1}^nw_{ij}x^k_j+u^k_i-\frac 12\bigl(x_{i}^{k-1}+\sum_{j=1}^nw_{ij}x^{k-1}_j\bigr) -\tau(v^k_i-v^{k-1}_i) \\
 x^{k+1}_i &= J_{\tau A_i}\bigl(u^{k+1}_i\bigr).
\end{aligned}\right.
\end{equation*}
We formally state the convergence proof of Algorithm~\ref{a:dist alg}.
\begin{corollary}\label{cor:dist1}
Suppose the inclusion \eqref{eq:n inlc} has at least one solution. Then the sequence $(\bx^k)$ given by Algorithm~\ref{a:dist alg} converges to a point $\bx=(x,\dots,x)\in\Hilbert^n$ such that $x$ is solution of \eqref{eq:n inlc}.
\end{corollary}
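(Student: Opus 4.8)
The plan is to observe that Corollary~\ref{cor:dist1} is simply the specialisation of Theorem~\ref{th:dist0} to the case where the abstract operator $W$ is a mixing matrix, so the proof amounts to checking that the hypotheses of Theorem~\ref{th:dist0} are met and that the iteration in Algorithm~\ref{a:dist alg} coincides with~\eqref{eq:mono dist alg}. First I would recall that, by Definition~\ref{d:mixing matrices}, a mixing matrix $W\in\mathbbm{R}^{n\times n}$ satisfies the symmetry property~\ref{d:mm b}, the kernel property~\ref{d:mm c}, and the spectral property~\ref{d:mm d}; viewing $W$ as a linear operator on $\Hilbert^n$ (equivalently, identifying $\Hilbert^n = \mathbbm{R}^{n\times p}$ and letting $W$ act by left multiplication on the coordinate vectors as in the Notation section), these are precisely Assumptions~\ref{p:selfadjoint}--\ref{p:spectral}. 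Hence the abstract framework of Theorem~\ref{th:dist0} applies with this choice of $W$.

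Next I would note that Algorithm~\ref{a:dist alg} prescribes a stepsize $\tau\in\bigl(0,\frac{1+\lambda_{\min}(W)}{4L}\bigr)$, which is exactly the stepsize range required in Theorem~\ref{th:dist0}, and that the initialisation ($\bx^0=\bx^{-1}$, $\bv^0=\bB(\bx^0)$, $\bu^1=\bx^0-\tau\bv^0$, $\bx^1=J_{\tau\bA}(\bu^1)$) together with the main update loop of Algorithm~\ref{a:dist alg} reproduce verbatim the recursion~\eqref{eq:mono dist alg}: the only cosmetic difference is that the quantity $\bv^0=\bB(\bx^0)$ is introduced so that the formula $\bv^k = 2\bB(\bx^k)-\bB(\bx^{k-1})$ is consistent with $\bu^1 = \bx^0-\tau\bB(\bx^0)$ when $k=0$. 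Since the inclusion~\eqref{eq:n inlc} is assumed to have a solution, all hypotheses of Theorem~\ref{th:dist0} are in force, and the theorem yields that $(\bx^k)$ converges to a point $\bx=(x,\dots,x)\in\Hilbert^n$ with $x$ a solution of~\eqref{eq:n inlc}.

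There is essentially no obstacle here: the corollary is a direct corollary, and the ``work'' has already been done in Proposition~\ref{prop:prod reform}, Theorem~\ref{th:main1}, and Theorem~\ref{th:dist0}. The only point requiring a word of care is the identification of a mixing matrix with a linear operator $W\colon\Hilbert^n\to\Hilbert^n$ satisfying \ref{p:selfadjoint}--\ref{p:spectral}: one should remark that when $\Hilbert=\mathbbm{R}^p$ and $\Hilbert^n$ is identified with $\mathbbm{R}^{n\times p}$, the operator $\bx\mapsto W\bx$ inherits self-adjointness from $W=W^\top$, that $\ker(\Id-W)$ computed on $\mathbbm{R}^{n\times p}$ still equals the diagonal $\mathcal{D}_{\Hilbert^n}$ because the action is coordinatewise, and that the spectral bounds $\Id\succeq W\succ -\Id$ carry over unchanged. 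With this identification in hand, the proof is a one-line appeal to Theorem~\ref{th:dist0}.
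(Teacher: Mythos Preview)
Your proposal is correct and follows exactly the paper's approach: the paper's proof is the single line ``Since a mixing matrix satisfies Properties~\ref{p:selfadjoint}--\ref{p:spectral}, the result follows from Theorem~\ref{th:dist0},'' and your argument is precisely this, with the added (but harmless) elaboration on how the mixing-matrix action on $\Hilbert^n$ inherits the required properties and how Algorithm~\ref{a:dist alg} matches~\eqref{eq:mono dist alg}.
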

\begin{proof}
Since a mixing matrix satisfies Properties~\ref{p:selfadjoint}--\ref{p:spectral}, the result follows from Theorem~\ref{th:dist0}.
\end{proof}

\begin{remark}
In the above derivation, the initialisation $\by^0=0$ was used for the primal-dual algorithm~\eqref{eq:th new dist alg}. This gives rise to the practically convenient initialisation $\bu^1=\bx^0-\tau\bB(\bx^0)$ in Algorithm~\ref{a:dist alg}. However, thanks to Theorem~\ref{th:main1}, convergence of \eqref{eq:th new dist alg} is guaranteed for any initialisation of $\by^0$. For instance, if we were to instead take $\by^0=2\tau^{-1}K\bx^0$, then the resulting initialisation for Algorithm~\ref{a:dist alg} would have been
\begin{align*}
\bu^1 = \bx^0-2K^2\bx^0-\tau\bv^0 &= \bx^0-(I-W)\bx^0-\tau\bv^0 = W\bx^0-\tau\bv^0. 
\end{align*}
The latter initialisation is analogous to the standard initialisation used in PG-EXTRA \cite[Algorithm~1]{shi2015proximal}. However, as noted in \cite[Chapter~11.3.3]{ryu2022large}, it has the disadvantage of requiring a round of communication in the initialisation step.
\end{remark}

\section{An Algorithm for decentralised min-max problems}\label{s:dist minmax}
We now are ready to derive a decentralised algorithm for solving the min-max problem \eqref{eq:minmax2}. We do so, under the assumption that a saddle-point to \eqref{eq:minmax2} exists, and that the following \emph{sum rule} identities hold:
\begin{equation}\label{eq:sum rule}
 \partial\bigl(\sum_{i=1}^nf_i\bigr) = \sum_{i=1}^n\partial f_i,\quad  \partial\bigl(\sum_{i=1}^ng_i\bigr) = \sum_{i=1}^n\partial g_i. 
\end{equation}
Then solving \eqref{eq:minmax2} is equivalent to the monotone inclusion~\eqref{eq:n inlc} with $z=(x,y)\in\Hilbert=\mathbbm{R}^p\times\mathbbm{R}^d$ and
\begin{equation}\label{eq:minmax parts}
A_i=(\partial f_i,\partial g_i),\quad B_i=(\nabla_x\phi_i,-\nabla_y\phi_i). 
\end{equation}

\begin{figure*}[t]
\tikzset{left node/.style={circle,fill=gray!20,draw,minimum size=0.5cm,inner
    sep=2pt}}
\tikzset{right node/.style={circle,fill=gray!70,draw,minimum size=0.5cm,inner sep=2pt}}
\hspace{0.4cm}
\centering
\begin{tikzpicture}

    \node[left node] (1) {$x_1$};
    \node[left node] (2) [below right = 1.2cm and -0.3cm of 1]  {$x_2$};
    \node[left node] (3) [below left = 2.3cm and 1.5cm of 1]  {$x_3$};
    \node[left node] (4) [below right = 3cm and 1.5cm of 1] {$x_4$};
    \path[draw,thick]
    (1) edge node {} (2)
    (2) edge node {} (4)
    (3) edge node {} (2);

  primal variables
\begin{scope}[xshift=11cm]
    \node[right node] (5) {$y_1$};
    \node[right node] (6) [below right = 1.2cm and -0.3cm of 5]  {$y_2$};
    \node[right node] (7) [below left = 2.3cm and 1.5cm of 5]  {$y_3$};
    \node[right node] (8) [below right = 3cm and 1.5cm of 5] {$y_4$};
    \path[draw,thick]
    (5) edge node {} (7)
    (7) edge node {} (8)
    (8) edge node {} (6)
    (5) edge node {} (6);

\end{scope}
    \path[dashed, gray]
    (1) edge node {} (5)
    (2) edge node {} (6)
    (3) edge node {} (7)
    (4) edge node {} (8);

  \end{tikzpicture}
  \caption{{(Left) A connected network of agents for the minimisation variable $\bx$, and (right) a different connected network of agents for the maximisation variable $\by$. Dashed lines depict connections between pairs $(x_i,y_i)$.}}\label{f:diff W}
\label{fig:pd}
\end{figure*}
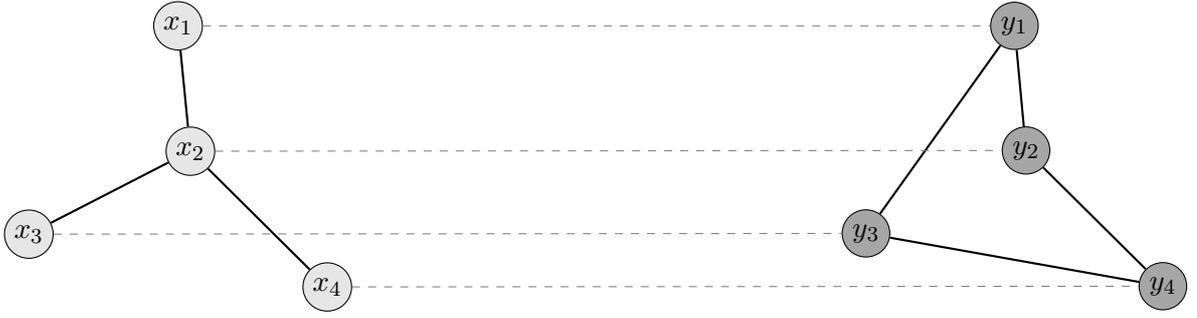

In order to allow different communication rates or even different connected networks with respect to the variables $x$ and $y$, we consider a pair of mixing matrices $W_1,W_2\in\mathbbm{R}^{n\times n}$; see Figure~\ref{f:diff W} for an example. Define a linear operator $W\colon\mathbbm{R}^{n\times p}\times\mathbbm{R}^{n\times d}\to \mathbbm{R}^{n\times p}\times\mathbbm{R}^{n\times d}$ according to
\begin{equation}\label{eq:W}
  W(\bz) = W(\bx,\by) = (W_1\bx,W_2\by).
\end{equation}
Then, applying the iteration \eqref{eq:mono dist alg} with \eqref{eq:minmax parts} and \eqref{eq:W} gives Algorithm~\ref{a:dist minmax}.

\begin{algorithm}[!htb]
\caption{Proposed decentralised algorithm for the min-max problem \eqref{eq:minmax}.}\label{a:dist minmax}
Choose mixing matrices $W_1,W_2\in\mathbbm{R}^{n\times n}$\;
Choose a stepsize $\tau\in\bigl(0,\frac{1+\min\{\lambda_{\min}(W_1),\lambda_{\min}(W_2)\}}{4L}\bigr)$\;
Choose any $\bx^0\in\mathbbm{R}^{n\times p}$ and $\by^0\in\mathbbm{R}^{n\times d}$, and set 
$$\bv^0_x = \nabla_x\phi(\bx^0,\by^0),\qquad \bv^0_y = - \nabla_y\phi(\bx^0,\by^0)$$
Initialise $\bx$ and $\by$ variables according to:
\begin{equation*}
\left\{\begin{aligned}
 \bu^1_x &= \bx^0-\tau\bv^0_x\\
 \bx^{1}   &= \prox_{\tau f}(\bu^{k}_x),
\end{aligned}\right.
\qquad
\left\{\begin{aligned}
  \bu^1_y &= \by^0-\tau\bv^0_y\\
 \by^{1}   &= \prox_{\tau g}(\bu^{k}_y).  
\end{aligned}\right.
\end{equation*}
\For{$k=1,2,\dots,$}{
Update $\bx$-variables according to:
\begin{equation*}\left\{\begin{aligned}
 \bv^k_x &= 2\nabla_x\phi(\bx^k,\by^k)-\nabla_x\phi(\bx^{k-1},\by^{k-1}) \\
 \bu^{k+1}_x &= W_1\bx^k+\bu^k_x-\frac{1}{2}(\Id+W_1)\bx^{k-1}  -\tau(\bv^k_x-\bv^{k-1}_x) \\
 \bx^{k+1}   &= \prox_{\tau f}(\bu^{k+1}_x)
\end{aligned}\right.
\end{equation*}
Update $\by$-variables according to:
\begin{equation*}\left\{\begin{aligned}
 \bv^k_y &= -2\nabla_y\phi(\bx^k,\by^k)+\nabla_y\phi(\bx^{k-1},\by^{k-1}) \\
 \bu^{k+1}_y &= W_2\by^k+\bu^k_y-\frac{1}{2}(\Id+W_2)\by^{k-1} -\tau(\bv^k_y-\bv^{k-1}_y) \\
 \by^{k+1}   &= \prox_{\tau g}(\bu^{k+1}_y).
\end{aligned}\right.
\end{equation*}
}
\end{algorithm}
The following is our main result regarding convergence of Algorithm~\ref{a:dist minmax}.

\begin{theorem}\label{th:dist minmax}
Suppose the min-max problem \eqref{eq:minmax2} has a saddle-point and the sum rules~\eqref{eq:sum rule} hold. Then the sequences $(\bx^k)$ and $(\by^k)$ given by Algorithm~\ref{a:dist minmax} converge, respectively, to points $\bx=(x,\dots,x)^\top\in\mathbbm{R}^{n\times p}$ and $\by=(y,\dots,y)^\top\in\mathbbm{R}^{n\times d}$ such that $(x,y)$ is a solution of~\eqref{eq:minmax2}.
\end{theorem}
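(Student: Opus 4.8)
The strategy is to verify that Algorithm~\ref{a:dist minmax} is precisely the instance of the abstract decentralised scheme~\eqref{eq:mono dist alg} obtained by the substitutions \eqref{eq:minmax parts} and \eqref{eq:W}, and then invoke Theorem~\ref{th:dist0} (equivalently, Corollary~\ref{cor:dist1}). First I would set $\Hilbert = \mathbbm{R}^p\times\mathbbm{R}^d$ and, with $z_i=(x_i,y_i)$, define $A_i=(\partial f_i,\partial g_i)$ and $B_i=(\nabla_x\phi_i,-\nabla_y\phi_i)$ as in \eqref{eq:minmax parts}. One must check that these operators satisfy the hypotheses of \eqref{eq:n inlc}: each $A_i$ is maximally monotone because it is the subdifferential of the proper lsc convex function $(x,y)\mapsto f_i(x)+g_i(y)$ (using \cite[Chapter~20.2]{bauschke2017convex} and the fact that a product of subdifferentials is the subdifferential of the separable sum); each $B_i$ is monotone because $\phi_i$ is convex-concave, so the operator $(\nabla_x\phi_i,-\nabla_y\phi_i)$ is monotone, and it is $L$-Lipschitz because $\nabla\phi_i$ is. It then remains to observe that the block operator $W$ defined in \eqref{eq:W} satisfies Assumptions~\ref{p:selfadjoint}--\ref{p:spectral} on the space $\mathbbm{R}^{n\times p}\times\mathbbm{R}^{n\times d}$: self-adjointness and the spectral bound are inherited block-wise from $W_1$ and $W_2$ being mixing matrices, with $\lambda_{\min}(W)=\min\{\lambda_{\min}(W_1),\lambda_{\min}(W_2)\}$, and the kernel property holds since $\ker(\Id-W)=\ker(\Id-W_1)\times\ker(\Id-W_2)=\mathcal{D}_{\mathbbm{R}^{n\times p}}\times\mathcal{D}_{\mathbbm{R}^{n\times d}}$, which is exactly the diagonal of $(\mathbbm{R}^p\times\mathbbm{R}^d)^n$ under the identification of Table~\ref{t:notation}.

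Next I would check the existence hypothesis of Theorem~\ref{th:dist0}, namely that the inclusion $0\in\sum_{i=1}^n(A_i+B_i)(z)$ has a solution. This is where the sum rules~\eqref{eq:sum rule} enter: a saddle-point $(x^\star,y^\star)$ of \eqref{eq:minmax2} satisfies the first-order optimality conditions $0\in\partial(\sum f_i)(x^\star)+\sum\nabla_x\phi_i(x^\star,y^\star)$ and $0\in\partial(\sum g_i)(y^\star)-\sum\nabla_y\phi_i(x^\star,y^\star)$; applying \eqref{eq:sum rule} converts $\partial(\sum f_i)=\sum\partial f_i$ and $\partial(\sum g_i)=\sum\partial g_i$, so that $z^\star=(x^\star,y^\star)$ solves $0\in\sum_i(A_i+B_i)(z)$. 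Hence the existence assumption of Theorem~\ref{th:dist0} holds.

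With these verifications in place, Theorem~\ref{th:dist0} applies: for any $\tau\in\bigl(0,\frac{1+\lambda_{\min}(W)}{4L}\bigr)=\bigl(0,\frac{1+\min\{\lambda_{\min}(W_1),\lambda_{\min}(W_2)\}}{4L}\bigr)$, the sequence $(\bz^k)=(\bx^k,\by^k)$ produced by \eqref{eq:mono dist alg} converges to a diagonal point $\bz=(z,\dots,z)$ with $z=(x,y)$ a solution of $0\in\sum_i(A_i+B_i)(z)$. Unpacking the diagonal structure gives $\bx^k\to(x,\dots,x)^\top$ and $\by^k\to(y,\dots,y)^\top$. Finally, I would translate the solution of the inclusion back into a saddle-point: since $0\in\sum_i(A_i+B_i)(z)$ means $0\in\sum_i\partial f_i(x)+\sum_i\nabla_x\phi_i(x,y)$ and $0\in\sum_i\partial g_i(y)-\sum_i\nabla_y\phi_i(x,y)$, a second application of \eqref{eq:sum rule} (in the reverse direction) yields $0\in\partial(\sum f_i)(x)+\nabla_x\phi(x,y)$ and $0\in\partial(\sum g_i)(y)-\nabla_y\phi(x,y)$, where here $\phi=\sum\phi_i$ acts on the un-lifted variables; these are exactly the optimality conditions characterising $(x,y)$ as a saddle-point of \eqref{eq:minmax2}. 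It remains only to confirm that the ``compact'' iteration \eqref{eq:mono dist alg} with the substitutions \eqref{eq:minmax parts}, \eqref{eq:W} literally reproduces the update rules displayed in Algorithm~\ref{a:dist minmax}; this is a routine coordinate-by-coordinate expansion, using that $J_{\tau\bA}=(\prox_{\tau f},\prox_{\tau g})$ by the separability recalled in the Notation section, and that $W$ acts block-diagonally so the $\bx$- and $\by$-updates decouple.

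\textbf{Main obstacle.} The only conceptual step is the use of the sum rules~\eqref{eq:sum rule}, which are needed precisely because the abstract inclusion \eqref{eq:n inlc} splits the objective as a \emph{sum of subdifferentials} $\sum_i\partial f_i$ whereas the saddle-point of \eqref{eq:minmax2} is characterised through the \emph{subdifferential of the sum} $\partial(\sum_i f_i)$ (and likewise for $g_i$); without a qualification condition these two may differ. Everything else — maximal monotonicity of $A_i$, monotonicity and Lipschitzness of $B_i$, the verification that $W$ satisfies \ref{p:selfadjoint}--\ref{p:spectral}, and the coordinate expansion showing Algorithm~\ref{a:dist minmax} coincides with \eqref{eq:mono dist alg} — is bookkeeping that follows directly from definitions and the results already established.
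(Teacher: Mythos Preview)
Your proposal is correct and follows essentially the same approach as the paper: reduce the min-max problem to the monotone inclusion~\eqref{eq:n inlc} via the substitutions~\eqref{eq:minmax parts}, verify that the block operator $W$ of~\eqref{eq:W} satisfies \ref{p:selfadjoint}--\ref{p:spectral} with $\lambda_{\min}(W)=\min\{\lambda_{\min}(W_1),\lambda_{\min}(W_2)\}$, and invoke Theorem~\ref{th:dist0}. If anything, you are more explicit than the paper about where the sum rules~\eqref{eq:sum rule} are actually used (both to pass from a saddle-point to a solution of the summed inclusion, and back), a point the paper relegates to a remark after the proof.
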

\begin{proof}
As explained above, the min-max problem~\eqref{eq:minmax2} is equivalent to \eqref{eq:inc1} with $A_i$ and $B_i$ as in \eqref{eq:minmax parts}. The operator $W$ given in \eqref{eq:W} satisfies Assumptions~\ref{p:selfadjoint}--\ref{p:spectral} where, in verifying \ref{p:kernel}, we note that
$ \bz=(\bx,\by)\in\mathcal{D}
   \iff z_1=\dots=z_n 
   \iff x_1=\dots=x_n\text{~and~}y_1=\dots=y_n 
   \iff \bz=(\bx,\by)\in\ker(\Id-W_1)\times\ker(\Id-W_2)=\ker(\Id-W)$. 
Moreover, we also have that $\lambda_{\min}(W) = \min\{\lambda_{\min}(W_1),\lambda_{\min}(W_2)\}$. Thus, altogether, applying Theorem~\ref{th:dist0} gives that the sequence $(\bz^k)=(\bx^k,\by^k)$ given by
\begin{equation*}
\left\{\begin{aligned}
 \bu^{1} &= \bz^0  - \tau\bB(\bz^0) \\
 \bz^{1} &= J_{\tau\bA}\bigl(\bu^{1}\bigr) \\ 
 \bu^{k+1} &= W\bz^k+\bu^k-\frac{1}{2}(\Id+W)\bz^{k-1}-\tau(\bv^k-\bv^{k-1}) \\
 \bz^{k+1} &= J_{\tau\bA}\bigl(\bu^{k+1}\bigr),
\end{aligned}\right.
\end{equation*}
converges to a solution of \eqref{eq:minmax2}. By extracting the variables $(\bx^k)$ and $(\by^k)$ from $(\bz^k)$, this iteration can written in the form of Algorithm~\ref{a:dist minmax}.
\end{proof}
\begin{remark} 
We note that, in the analysis of PG-EXTRA~\cite{shi2015proximal}, the non-smooth components of the objective function are assumed to have full domain. In this case, the subdifferential sum-rule automatically holds and so its importance is not discussed in \cite{shi2015proximal}. Since we do not assume that $f_1,\dots,f_n,g_1,\dots,g_n$ have full domain, it is necessary for us to include \eqref{eq:sum rule} as an assumption.
\end{remark}

\section{Conclusions and Outlook}
In this work, we have developed and analysed a new simple decentralised
distributed algorithm for solving min-max problems. The algorithm, which can be
considered as the counterpart of PG-EXTRA for solving min-max problems, has many
desirable convergence properties including non-decaying stepsizes, minimal
communication requirements, and an iteration tailored specifically to the
properties of the problem. The convergence analysis of the method is based on a new primal-dual algorithm, known as the primal-dual twice referenced algorithm (PDTR). Further study of PDTR algorithm may be of interest in its own right.

\section*{Acknowledgements}
We thank the referees for their constructive comments which
have helped improved the manuscript. MKT is supported in part by Australian Research Council grants DE200100063 and DP230101749.

\bibliographystyle{abbrv}
\bibliography{biblio}

\end{document}